\theoremstyle{plain} 
\newtheorem{thm}{Theorem}[section]
\newtheorem{lem}[thm]{Lemma}
\newtheorem{prop}[thm]{Proposition}
\theoremstyle{definition}
\newtheorem{defn}[thm]{Definition}
\theoremstyle{remark}
\newtheorem{rem}[thm]{Remark}
\newcommand{\R}{\mathbb{R}}
\newcommand{\C}{\mathbb{C}}
\numberwithin{equation}{section}
\begin{document}

\author{Nick Edelen}
\title{A conservation approach to helicoidal surfaces of constant mean curvature in $\R^3$, $\mathbb{S}^3$ and $\mathbb{H}^3$}
\date{August 12, 2011}
\maketitle

\begin{abstract}
We develop a conservation law for constant mean curvature (CMC) surfaces introduced by Korevaar, Kusner and Solomon \cite{kks}, and provide a converse, so as to characterize CMC surfaces by a conservation law.  We work with `twizzler' construction, which applies a screw-motion to some base curve.  We show that, excluding cylinders, CMC helicoidal surfaces can be completely determined by a first-order ODE of the base curve.  Further, we demonstrate that in $\R^3$ this condition is equivalent to the treadmillsled characterization of helicoidal CMC surfaces given by Perdomo \cite{perdomo}.
\end{abstract}

\section{Introduction}

We study immersed, constant mean curvature (CMC) surfaces which have screw-motion symmetry.  Such surfaces can be described by the \emph{twizzler} construction, which applies a screw-motion to some base curve $\gamma$.  Recently Perdomo \cite{perdomo} characterized CMC twizzlers in $\R^3$ by giving, and geometrically interpreting, a first-order ODE on $\gamma$, using what he called the `treadmillsled coordinates.'

In 1989 Korevaar, Kusner and Solomon \cite{kks} derived (later refined by Kusner \cite{kusner}) a `flux conservation' law of any CMC surface $\Sigma$ immersed in a simply-connected spaceform.  The law describes in essence a conserved quantity on any loop on $\Sigma$ in a given homology class.  When calculating this quantity for certain examples of surfaces, a striking similarity with Perdomo's treadmillsled condition arises.

Here we will extend the result of \cite{kusner} to the converse: given that a certain quantity is conserved over every loop in some homology class of $\Sigma$, then necessarily $\Sigma$ has constant mean curvature.  In each spaceform $\R^3$, $\mathbb{S}^3$ (the $3$-sphere), $\mathbb{H}^3$ ($3$-dimensional hyperbolic space), we use the conservation law to derive a first-order ODE on $\gamma$ for its twizzler to have constant mean curvature.  Further, by considering $\gamma$ parameterized by the angle of normal (the \emph{support} parameterization), we show that the treadmillsled condition of \cite{perdomo} is identical to the conservation law on $\gamma$ in $\R^3$.

This is helpful since the second-order ODE for constant mean curvature is quite difficult to understand.  In $\R^3$ \cite{perdomo} classified geometrically the curves satisfying this first-order ODE, and we hope to find a similar analysis in $\mathbb{S}^3$ and $\mathbb{H}^3$.

I would like to extend my sincerest thanks to Professor Bruce Solomon for supervising, and to the NSF for funding this research as part of the 2011 Indiana University REU program.

\subsection{Preliminaries}

Consider an orientable surface $\Sigma$ embedded in some 3-dimensional, simply-connected spaceform $N$.  Pick $\boldsymbol\nu$ to be the smooth unit normal of $\Sigma$ in $N$.  Let $(\mathbf g_1, \mathbf g_2, \mathbf g_3)$ be any local, orthonormal frame on $N$, and likewise $(\mathbf f_1, \mathbf f_2, \boldsymbol \nu)$ a local, orthonormal adapted frame of $\Sigma$.  For any vector field $\mathbf Y$ on $\Sigma$, we call $\mathbf Y^\top = (\mathbf Y \cdot \boldsymbol \nu) \boldsymbol\nu$ and $\mathbf Y^\bot = \mathbf Y - \mathbf Y^\top$ the normal and tangential components of $\mathbf Y$.

Denote covariant differentiation in $N$ by $\mathrm{D}$.  For any smooth vector field $\mathbf Y$, the divergence in $N$ is given by $\mathrm{DIV}(\mathbf Y) = \sum_1^3 (\mathrm{D}_{\mathbf g_i} \mathbf Y) \cdot \mathbf g_i$, and the divergence on $\Sigma$ by $\mathrm{div}(\mathbf Y) = \mathrm{D}_{\mathbf f_1} \mathbf Y \cdot \mathbf f_1 + \mathrm{D}_{\mathbf f_2} \mathbf Y \cdot \mathbf f_2$.  The gradient and Laplacian on $\Sigma$ are given by $\nabla \mathbf Y = (\mathrm D \mathbf Y)^\bot$ and $\Delta \mathbf Y = \mathrm{div}(\nabla \mathbf Y)$.

All $3$-dimensional integrals are taken with respect to the volume element of $N$; integrals of $2$-dimensions are taken w.r.t. the surface element.

The mean curvature vector of $\Sigma$ is $\mathbf{h} = h \boldsymbol \nu = \Delta \mathbf{x}$, for $\mathbf{x}$ the inclusion mapping.  The mean curvature $h$ is, up to sign, the trace of the second fundamental form.

Let $(\mathbf e_1, \cdots, \mathbf e_n)$ denote the standard orthonormal frame of $\R^n$.  In $\R^3$, we will often (implicitly) identify $\mathrm{span}(\mathbf e_1, \mathbf e_2)$ with $\C$ by $a \mathbf e_1 + b \mathbf e_2 \leftrightarrow a + i b$.  Likewise, we will identity $\R^4$ with $\C \times \C$.

\section{Conservation Law}\label{conservation-section}

We loosely follow \cite{kks}.  Let $\Sigma$ be a connected, orientable surface embedded in a 3-dimensional, simply-connected spaceform $N$ (i.e. effectively $\R^3$, $\mathbb{S}^3$ or $\mathbb{H}^3$).  Identify the Killing fields on $N$ with the Lie algebra $\mathfrak{g}$ of $N$'s isometry group.  Observe that the Killing vectors span each tangent space on $N$.

Let $\Gamma_1$ and $\Gamma_2$ be two smooth, homologous $1$-cycles in $\Sigma$ bounding a compact subset $S \subset \Sigma$.  As $H_1(N) = 0$, we can take $K_1$ and $K_2$ to be any smooth $2$-chains so that $\partial K_i = -\Gamma_i$.  Then, since $H_2(N) = 0$, there is a $3$-chain $U \subset N$ with piecewise smooth boundary $\partial U = S + K_1 - K_2$.

We write $\boldsymbol\nu$ for the unit normal on any $2$-chain in $N$; likewise, denote the unit conormal on any $1$-chain in $\Sigma$ by $\boldsymbol\eta$.

Pick a Killing field $\mathbf Y \in \mathfrak g$.  It is easily found that the variation of volume $|U|$ along $\mathbf Y$ is
\[
\delta_{\mathbf Y}(|U|) = \int_U \mathrm{DIV}(\mathbf Y) = \int_{K_1-K_2} \mathbf Y \cdot \nu + \int_S \mathbf Y \cdot \nu
\]

Similarly, the variation of area $|S|$ along $\mathbf Y$ is known to be \cite{simon}:
\begin{align*}
\delta_{\mathbf Y}(|S|) = \int_S \mathrm{div}(\mathbf Y) & = \int_S \mathrm{div}({\mathbf Y}^\top) + \mathrm{div}({\mathbf Y}^\bot) \\
 & = \int_{\partial S} \mathbf Y \cdot \boldsymbol \eta + \int_S \mathbf Y \cdot \boldsymbol \nu \, \mathrm{div}(\boldsymbol \nu) \\
 & = \int_{\Gamma_1 - \Gamma_2} \mathbf Y \cdot \boldsymbol \eta + \int_S \mathbf Y \cdot h \boldsymbol \nu
\end{align*}
having used Stokes' theorem, and observing that $\mathrm{div}(\boldsymbol \nu) = \mathrm{trace}(\nabla \boldsymbol \nu) = h$.

Combining these two calculations, we obtain the \emph{first variation formula}:
\begin{align}
0 &= \delta_{\mathbf Y}(|S| - H|U|) \notag\\
&= \int_{\Gamma_1 - \Gamma_2} \mathbf Y \cdot \boldsymbol \eta - H \int_{K_1-K_2} \mathbf Y \cdot \boldsymbol \nu + \int_S (h - H) \mathbf Y \cdot \boldsymbol \nu \label{first-variation-formula}
\end{align}
The first equality is a direct consequence of $\mathbf Y$ being a Killing field.

Our main theorem arises naturally from relation \eqref{first-variation-formula}.

\begin{thm}\label{conservation-theorem}
Using the above notation, if $\Sigma$ has constant mean curvature $H$ then there is a linear function $\omega : H_1(\Sigma) \rightarrow \mathfrak{g}^*$ defined by
\begin{equation}\label{conservation-formula}
\omega([\Gamma])(\mathbf Y) = \oint_\Gamma \mathbf Y\cdot \boldsymbol \eta - H \iint_K \mathbf Y \cdot \boldsymbol \nu
\end{equation}
where $K$ is any smooth $2$-chain with $\partial K = -\Gamma$.

Conversely, if for any homology class $[\Gamma] \in H_1(\Sigma)$, $\omega([\Gamma])$ as given by \eqref{conservation-formula} is well-defined, then $\Sigma$ has constant mean curvature.
\end{thm}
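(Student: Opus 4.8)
The plan is to derive both directions from the first variation formula \eqref{first-variation-formula}, which already isolates the relevant terms. For the forward direction, assume $h \equiv H$. Then the last term in \eqref{first-variation-formula} vanishes identically, so for any compact $S \subset \Sigma$ bounded by homologous cycles $\Gamma_1, \Gamma_2$ with chosen $2$-chains $K_1, K_2$,
\begin{equation*}
\oint_{\Gamma_1} \mathbf Y \cdot \boldsymbol\eta - H \iint_{K_1} \mathbf Y \cdot \boldsymbol\nu = \oint_{\Gamma_2} \mathbf Y \cdot \boldsymbol\eta - H \iint_{K_2} \mathbf Y \cdot \boldsymbol\nu.
\end{equation*}
I would first check that the quantity $\oint_\Gamma \mathbf Y \cdot \boldsymbol\eta - H \iint_K \mathbf Y \cdot \boldsymbol\nu$ does not depend on the choice of $2$-chain $K$ with $\partial K = -\Gamma$: given two such chains $K, K'$, their difference is a $2$-cycle in $N$, hence (since $H_2(N)=0$) bounds a $3$-chain $V$, and $\iint_{K-K'}\mathbf Y\cdot\boldsymbol\nu = \int_V \mathrm{DIV}(\mathbf Y) = 0$ because $\mathbf Y$ is Killing. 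Then the displayed identity shows the value depends only on the homology class $[\Gamma]$, so $\omega([\Gamma])(\mathbf Y)$ is well-defined; linearity in $\mathbf Y$ is immediate from the formula, and linearity in $[\Gamma]$ follows from additivity of integrals over concatenated cycles (plus the bilinearity bookkeeping on the $2$-chains).

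For the converse, suppose $\omega([\Gamma])$ is well-defined for every class, i.e. the expression in \eqref{conservation-formula} is independent of the representative cycle and of the bounding $2$-chain. Take any two homologous cycles $\Gamma_1, \Gamma_2$ cobounding $S$; well-definedness forces the two sides of the displayed identity above to agree, so \eqref{first-variation-formula} collapses to $\int_S (h - H)\,\mathbf Y \cdot \boldsymbol\nu = 0$ for every Killing field $\mathbf Y$. Since the Killing fields span each tangent space of $N$ — in particular there is, near any point, a Killing field whose value there is any prescribed vector, so $\mathbf Y \cdot \boldsymbol\nu$ can be made an arbitrary smooth function supported near that point after a partition-of-unity argument — and since $S$ ranges over all sufficiently small subsurfaces of $\Sigma$, the fundamental lemma of the calculus of variations gives $h \equiv H$ on $\Sigma$. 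Finally I would note the constant $H$ in \eqref{conservation-formula} is fixed in advance, but any value of $H$ for which $\omega$ is well-defined must equal the mean curvature by the same argument, so $\Sigma$ is CMC.

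The main obstacle is the converse, specifically the passage from "$\int_S (h-H)\mathbf Y\cdot\boldsymbol\nu = 0$ for all Killing $\mathbf Y$ and all $S$" to "$h \equiv H$." One must be careful that $S$ is required to be bounded by a pair of \emph{homologous} cycles — on a surface of positive genus any small disk works, but one should confirm small topological disks suffice in general, or else argue locally. The other delicate point is producing, for each point $p \in \Sigma$ and each smooth bump function $\phi$ supported near $p$, a Killing field $\mathbf Y$ with $\mathbf Y \cdot \boldsymbol\nu \approx \phi$ on $\Sigma$; this uses that $\dim \mathfrak g$ is large enough (six in each spaceform) that the evaluation map $\mathfrak g \to T_p N$ is surjective, combined with a localization/cutoff to kill the contribution away from $p$ — but cutting off a Killing field destroys the Killing property, so one instead argues that the integrand $(h-H)(p)$, being continuous, must vanish because otherwise choosing $\mathbf Y$ with $\mathbf Y\cdot\boldsymbol\nu(p) = (h-H)(p)\ne 0$ and $S$ a small enough disk around $p$ makes the integral nonzero by continuity. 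I expect the bulk of the write-up to be this last paragraph's careful statement; the rest is direct manipulation of \eqref{first-variation-formula}.
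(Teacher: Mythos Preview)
Your approach is essentially the paper's: both directions come straight from the first variation formula \eqref{first-variation-formula}, and your converse argument --- pick $p$ with $h(p)\neq H$, choose a Killing field with $\mathbf Y(p)\cdot\boldsymbol\nu(p)\neq 0$, and integrate over a small disk to get a sign contradiction --- is exactly what the paper does. Your flagged worry about whether small disks qualify as ``bounded by homologous cycles'' is handled in the paper by first reducing to the null homology class (well-definedness on an arbitrary class $[\Gamma]$ plus additivity forces $\omega([0])=0$), after which any compact $S$ with null-homologous boundary, in particular any small disk, is admissible.
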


\begin{proof}
If $h = H$ everywhere on $\Sigma$, then the first variation formula \eqref{first-variation-formula} reduces to
\[
0 = \oint_{\Gamma_1 - \Gamma_2} \mathbf Y \cdot \boldsymbol \eta - H \iint_{K_1-K_2} \mathbf Y \cdot \boldsymbol \nu
\]
and as our choice of $\Gamma_i$ is arbitrary, we can fix $\Gamma_1$, and immediately observe that for any $\Gamma_2$,
\[
\oint_{\Gamma_2} \mathbf Y \cdot \boldsymbol \eta - H \iint_{K_2} \mathbf Y \cdot \boldsymbol \nu = \oint_{\Gamma_1} \mathbf Y \cdot \boldsymbol \eta - H \iint_{K_1} \mathbf Y \cdot \boldsymbol \nu = \mathrm{constant}
\]

Conversely, suppose that $\omega$ is well-defined for the null homology class $[0] \in H_1(\Sigma)$.  Then by the first variation formula,
\begin{equation}\label{mean-curvature-0-integral}
0 = \iint_S (h - H) \mathbf Y \cdot \boldsymbol \nu
\end{equation}
for any compact $S \subset \Sigma$ with smooth boundary.

Suppose, towards a contradiction, there is a $p \in \Sigma$ with $h(p) \neq H$; wlog suppose $h(p) > H$, and hence $h(p) > H$ in some open neighborhood $S_1$ of $p$.  Then we can choose a $\mathbf Y$ so that $\mathbf Y(p) \cdot \boldsymbol \nu(p) > 0$, an $\epsilon > 0$, and a neighborhood $S_2 \subset S_1$ of $p$ so that $\mathbf Y \cdot \boldsymbol \nu > \varepsilon$ on $S_2$.  Then for any sufficiently small ball $B$ centered at $p$, with $B \cap \Sigma \subset S_2$, 
\[
0 = \iint_{B \cap \Sigma} (h - H) \mathbf Y \cdot \boldsymbol \nu > 0
\]
This contradiction shows that $h \equiv H$ on $\Sigma$.

More generally, if $\omega$ is well-defined for an arbitrary homology class $[\Gamma] \in H_1(\Sigma)$, we have that $\omega([\Gamma + \Gamma_0]) = \omega([\Gamma])$ for any null-homologous $\Gamma_0$.  Thus, by linearity of $\omega$, $\omega([\Gamma_0]) = 0$, and we have already shown this forces $\Sigma$ to have constant mean curvature.
\end{proof}

\begin{rem}\label{conservation-remark}
Embedding of $\Sigma$ is not essential in the theorem above, which can be readily generalized to immersed surfaces.

\end{rem}

\begin{rem}
In a more general setting, the invariant $\omega$ actually lives in the relative 2-homology $H_2(N, \Sigma \cup B)$, where $B$ is a basis for 1-homology of $N$.  Bruce Solomon and the author will generalize theorem \ref{conservation-theorem} to non-trivial homologies in a joint paper, to appear soon.
\end{rem}


\section{Twizzlers in $\R^3$, $\mathbb{S}^3$ and $\mathbb{H}^3$}\label{twizzler-section}

In this section we introduce twizzlers to explicitly parameterize helicoidal surfaces in terms of a base curve.  Using the ideas presented by theorem \ref{conservation-theorem}, we then derive a first-order ODE on the base curve to characterize CMC twizzlers.

We will only lay out in full the proof of twizzlers in $\R^3$, as it readily generalizes to $\mathbb{S}^3$ and $\mathbb{H}^3$.

\subsection{Case: $\R^3$}

\begin{defn}
Let $\gamma : I \rightarrow \C$ be an immersed $C^2$ curve on the interval $I \subset \R$, and $m \in \R_+$.  Then the \emph{twizzler of $\gamma$ with pitch m} in $\R^3$ is the surface $T$ parameterized by
\[
T(u, v) = e^{i v}\gamma(u) + m v \mathbf{e_3} \quad (u, v) \in I \times \R
\]
which we may also reference with the pair $\langle \gamma, m \rangle$.
\end{defn}

$T$ is always immersed, orientable, and connected.  Holding $u = u_0$ constant, we call the curve $T(u_0, v)$ a \emph{helix} of $T$.  $T$ has discrete translational symmetry along the $z$-axis, described by the group $G$ with action $g \cdot \mathbf{r} = \mathbf{r} + 2\pi m \mathbf{e_3}$.  Denote the quotient surface $T / G$ by $\hat{T}$, and observe that the helices of $T$ are smooth loops in $\hat{T}$.

Using the language of section \ref{conservation-section}, we would like to take $\Gamma$ as a helix of $T$, and $N$ as $\R^3/G$.  Complications arise, however, since helices are not null-homologous in $\R^3/G$.  We bypass this issue by using the $z$-axis as a `reference' $1$-cycle, homologous to the helices in $\R^3/G$, to construct $2$-chains with a common boundary.

As suggested by our remark \ref{conservation-remark}, we will explicitly prove that theorem \ref{conservation-theorem} applies although $T$ is not embedded.

\begin{defn}
The \emph{shaving of $\hat{T}$ at $u_0$} is the surface $\mathcal{T}[u_0]$ parameterized by
\[
\mathcal{T}[u_0](v, t) = t e^{i v} \gamma(u_0) + m v e_3 \quad (v, t) \in [0, 2\pi] \times [0, 1]
\]
we can consider $\mathcal{T}[-]$ a bijection between $I$ and shavings on $\hat{T}$.
\end{defn}

\begin{thm}
The twizzler $T \equiv \langle \gamma, m \rangle$ in $\R^3$ has constant mean curvature iff there are constants $H$ and $C$ so that
\begin{equation}\label{first-integral-r3-twizzler}
C = \frac{2\pi}{\sqrt{g}} m\gamma' \cdot i \gamma - H\pi |\gamma|^2
\end{equation}
where $\sqrt{g} = \sqrt{(T_u \cdot T_u)(T_v \cdot T_v) - (T_u \cdot T_v)^2}$ is the area density on $T$.

Further, if $T$ is not a cylinder, $H$ is the mean curvature of $T$.
\end{thm}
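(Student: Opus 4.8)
The plan is to apply Theorem~\ref{conservation-theorem} to the quotient surface $\hat T$ inside the quotient spaceform $N=\R^3/G$, using a helix $\Gamma$ of $T$ as the $1$-cycle and the shaving $\mathcal T[u_0]$ together with (a piece of) the $z$-axis as the bounding $2$-chain. First I would set up the topology carefully: a helix $T(u_0,\cdot)$ descends to a smooth loop in $\hat T$, and although it is not null-homologous in $N$, it is homologous to the image of the $z$-axis there; the shaving $\mathcal T[u_0]$ is precisely a $2$-chain interpolating between the helix and the axis, so $\partial\mathcal T[u_0]=-\Gamma_{u_0}+(\text{axis segment})$ in $N$. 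Since the axis contribution is the same for every $u_0$, the conserved quantity $\omega([\Gamma])(\mathbf Y)=\oint_\Gamma \mathbf Y\cdot\boldsymbol\eta - H\iint_{\mathcal T[u_0]}\mathbf Y\cdot\boldsymbol\nu$ (up to this fixed axis term) is independent of $u_0$ exactly when $\hat T$, equivalently $T$, has constant mean curvature $H$ --- this is the content of Remark~\ref{conservation-remark} applied to the immersed, non-simply-connected situation, and I would spell out the minor modifications needed (working with currents, or just directly re-running the first variation argument of Section~\ref{conservation-section} on $\hat T$).

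Next I would choose the Killing field. The natural choice is the vertical-translation field $\mathbf Y=\mathbf e_3$, or rather the rotational Killing field $\mathbf Y = i\mathbf x$ (infinitesimal rotation about the $z$-axis) --- I expect the rotational field is the one that produces \eqref{first-integral-r3-twizzler}, since the integrand carries the factor $\gamma'\cdot i\gamma$ which is manifestly the rotational moment. With that choice I would compute the two pieces of $\omega$ explicitly along a single helix. For the line integral $\oint_\Gamma \mathbf Y\cdot\boldsymbol\eta$: parameterize the helix by $v\in[0,2\pi]$, compute the conormal $\boldsymbol\eta$ to the helix within $T$ (this is the unit vector tangent to $T$, orthogonal to $T_v$, pointing appropriately), and evaluate $(i\,T)\cdot\boldsymbol\eta$. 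Using $T_u=e^{iv}\gamma'$, $T_v=ie^{iv}\gamma+m\mathbf e_3$, one gets $\boldsymbol\eta$ as an explicit combination of $T_u$ and $T_v$ divided by $\sqrt g$ and $|T_v|$; the rotational invariance of the whole configuration under $e^{iv}$ makes the integrand constant in $v$, so the integral is just $2\pi$ times a value at $v=0$, producing the term $\tfrac{2\pi}{\sqrt g}m\,\gamma'\cdot i\gamma$ after the dust settles. For the area integral over the shaving, $\mathcal T[u_0](v,t)=te^{iv}\gamma(u_0)+mv\mathbf e_3$, I would compute $\mathcal T_v\times\mathcal T_t$, dot with $i\mathbf x$, and integrate $t$ from $0$ to $1$ and $v$ from $0$ to $2\pi$; again rotational symmetry collapses the $v$-integral and the $t$-integral yields the $\pi|\gamma|^2$ factor, giving $-H\pi|\gamma(u_0)|^2$.

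Assembling these, $\omega([\Gamma_{u_0}])(i\mathbf x)$ equals $\tfrac{2\pi}{\sqrt g}m\,\gamma'(u_0)\cdot i\gamma(u_0) - H\pi|\gamma(u_0)|^2$ plus a $u_0$-independent constant from the axis segment; by Theorem~\ref{conservation-theorem} (as extended) this is independent of $u_0$ if and only if $T$ has constant mean curvature $H$, which is exactly \eqref{first-integral-r3-twizzler} with $C$ absorbing the constant. I expect the main obstacle to be two-fold: first, the bookkeeping for the non-trivial homology --- verifying that the axis-segment contribution to $\omega$ really is the same for all $u_0$ (so that it can be discarded), and that the converse direction of Theorem~\ref{conservation-theorem} still forces $h\equiv H$ on a non-embedded quotient surface where one must work with the covering $T$ and periodicity; and second, getting the conormal $\boldsymbol\eta$ and the two orientations (of $\Gamma$, of $\mathcal T[u_0]$, of $\boldsymbol\nu$) mutually consistent so that the signs and the precise constants $2\pi$, $\pi$ come out as stated rather than off by a factor. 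For the final sentence --- that $H$ is the mean curvature when $T$ is not a cylinder --- I would argue that if \eqref{first-integral-r3-twizzler} holds with constant $H'$ in place of $H$ and $T$ has constant mean curvature $H\ne H'$, then subtracting the two relations forces $|\gamma|^2$ to be constant, i.e. $\gamma$ lies on a circle about the origin, whose twizzler is a cylinder; contrapositively, for a non-cylindrical $T$ the constant $H$ appearing in the conserved quantity is pinned down and must equal the mean curvature.
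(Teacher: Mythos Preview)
Your overall architecture is the paper's: use helices as $1$-cycles, shavings as the capping $2$-chains, plug into the first-variation formula, and compute both integrals explicitly using the screw-symmetry so that the $v$-integral collapses to a factor $2\pi$.  Two points of divergence are worth noting.

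\emph{Choice of Killing field.}  You lean toward the rotational field $\mathbf Y=i\mathbf x$ on the grounds that the answer contains $\gamma'\cdot i\gamma$.  The paper instead takes $\mathbf Y=\mathbf e_3$.  With $\mathbf e_3$ one has $T_u\cdot\mathbf e_3=0$ and $T_v\cdot\mathbf e_3=m$, so in the Gram--Schmidt expression for $\boldsymbol\eta$ the factor $\gamma'\cdot i\gamma$ enters through $T_u\cdot T_v$, not through any pairing with a rotational field; the shaving integral then gives exactly $-\pi|\gamma|^2$.  Your rotational choice also works, but it produces $m$ times the stated formula (both terms pick up an extra $m$), so after division you recover \eqref{first-integral-r3-twizzler}.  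This is consistent with the fact that the screw-motion generator $i\mathbf x+m\mathbf e_3$ is tangent to every helix, hence annihilated by $\boldsymbol\eta$, forcing $\omega(i\mathbf x)=-m\,\omega(\mathbf e_3)$.

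\emph{Axis bookkeeping and the converse.}  Rather than tracking an ``axis contribution'' and arguing it is $u_0$-independent, the paper simply applies the first-variation formula to the region bounded by $S+K_1-K_2$ with $K_i=\mathcal T[u_i]$; the common axis boundary cancels in $K_1-K_2$, so no separate bookkeeping is needed.  For the converse the paper does not invoke Theorem~\ref{conservation-theorem} abstractly but re-runs the argument on small embedded pieces of $\hat T$: from $\int_J(h-H)\,\mathbf e_3\cdot\boldsymbol\nu\sqrt{|\gamma|^2+m^2}=0$ one gets $h\equiv H$ on the open set $I^*=\{\mathbf e_3\cdot\boldsymbol\nu\neq 0\}$; its complement consists of intervals on which $\gamma'\perp\gamma$, i.e.\ cylinder pieces, and continuity of $h$ at their endpoints forces $h=H$ there too unless $T$ is globally a cylinder.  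This already yields the final clause, making your subtraction argument (two instances of \eqref{first-integral-r3-twizzler} with different $H$ force $|\gamma|$ constant) correct but superfluous.
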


\begin{proof}
Take $\mathbf{Y} = \mathbf{e_3}$, generating translation along the $z$-axis; $\mathbf{Y}$ then descends to $\R^3/G$.  Assume $T$ has constant mean curvature $H$, and pick an interval $J \subset I$ so that $S = T(J \times [0, 2\pi])$ is embedded.  For $i = 1, 2$, let $\Gamma_i \subset S$ be the helix at any point $u_i \in J$, and $K_i \equiv \mathcal{T}[u_i]$.  Then $S + K_1 - K_2$ bounds a compact volume in $\R^3/G$, and the first variation formula \eqref{first-variation-formula} holds.  As in the proof of theorem \ref{conservation-theorem}, we deduce the existence of a constant $C$ so that, for any shaving $\mathcal{T}[u]$, $u \in J$, 
\begin{equation}\label{conservation-formula-r3-twizzler}
C = \oint_{\partial \mathcal{T}[u]} \mathbf{Y}\cdot \boldsymbol \eta - H \iint_{\mathcal{T}[u]} \mathbf{Y} \cdot \boldsymbol \nu
\end{equation}

Note that if $T$ is a cylinder, $\iint_S \boldsymbol \nu$ vanishes for any $S \subset \Sigma$ bounded by helices, so for any $H$ we can find a $C$ so that \eqref{conservation-formula-r3-twizzler} holds.  We then evaluate \eqref{conservation-formula-r3-twizzler} explicitly to give relation \eqref{first-integral-r3-twizzler}.

Using Gram-Schmidt, we calculate that
\begin{align*}
\boldsymbol\eta \cdot \mathbf Y &= \frac{\sqrt{T_v \cdot T_v}}{\sqrt{(T_u \cdot T_u)(T_v \cdot T_v) - (T_u \cdot T_v)^2}} \left(T_u - \frac{T_u \cdot T_v}{T_v \cdot T_v} T_v\right) \cdot \mathbf Y\\
 &= -\frac{\sqrt{|\gamma|^2 + m^2}}{\sqrt{g}} \frac{\gamma' \cdot i\gamma}{|\gamma|^2+m^2} m
\end{align*}
so that
\begin{align*}
\oint_{\partial S} \mathbf Y \cdot \boldsymbol \eta &= \frac{1}{\sqrt{g}} \int_0^{2\pi} - \frac{m \gamma' \cdot i\gamma}{|\gamma|^2+m^2} (|\gamma|^2 + m^2) \,\mathrm d v \\
 &= -\frac{2\pi}{\sqrt{g}} m\gamma' \cdot i\gamma
\end{align*}.

On the shaving, 
\[
\boldsymbol \nu \,\mathrm d S = S_v \wedge S_t = i\gamma e^{i v} - t|\gamma|^2\mathbf{e_3}
\]
Note order of cross product: we must be consistent with the requirement $\partial K = -\Gamma$.  The second integral becomes
\[
\iint_S \mathbf Y \cdot \boldsymbol \nu = -\int_0^{2\pi}\int_0^1 t|\gamma|^2 \,\mathrm d t \,\mathrm d v = -\pi|\gamma|^2
\]

For each point $u$ we can find a neighborhood $u \in J_u \subset I$ so that $T(J_u \times [0, 2\pi])$ is embedded, and hence a constant $C_u$ satisfying \eqref{first-integral-r3-twizzler} on $T(J_u \times [0, 2\pi])$.  By compactness, any closed interval $[a, b] \subset I$ is covered by finitely many $J_u$, implying that $C_a = C_b$.  We deduce that $C = C_a = C_b$ satisfies \eqref{first-integral-r3-twizzler} everywhere on $T$.

Conversely, given that \eqref{first-integral-r3-twizzler} -- and hence the conservation formula \eqref{conservation-formula} -- holds for every shaving, the first variation formula \eqref{first-variation-formula} gives
\begin{align}
0 &= \iint_{\mathbf{T}(J \times [0, 2\pi])} (h - H) \mathbf{Y} \cdot \boldsymbol \nu \notag \\
 &= 2\pi \int_{\gamma(J)} (h - H) \mathbf{Y} \cdot \boldsymbol \nu \sqrt{|\gamma|^2 + m^2} \label{non-zero-shaving-integral-r3}
\end{align}
whenever $J \subset I$ is sufficiently small for $T(J \times [0, 2\pi])$ to be embedded.

The subset $I^* \subset I$ defined by $\mathbf{Y} \cdot \boldsymbol \nu \neq 0$ is open in $I$.  Suppose $h(p) \neq H$ at some point $p \in I^*$, then there is a ball $B_\varepsilon \ni p$ in $I^*$ on which $h - H$ has a fixed sign.  But likewise $\mathbf{Y}\cdot\boldsymbol \nu \neq 0$ in $B_\varepsilon$, so the integral \eqref{non-zero-shaving-integral-r3} cannot vanish, a contradiction.  It follows that $h \equiv H$ on $I^*$.

If $I^*$ is also dense in $I$, then $h = H$ everywhere.  If $I^*$ is not dense, there is a maximal interval $I_0 \subset I - I^*$ on which $\mathbf{Y} \cdot \boldsymbol \nu = 0$.  Then necessarily, $\gamma' \bot \gamma$ on $I_0$, and hence $T(I_0 \times [0, 2\pi])$ is a segment of a cylinder having some constant mean curvature $H_0$.

If $I_0 = I$ then $T$ is entirely a cylinder, with mean curvature $H_0$.  Otherwise there is a sequence of points in $I^*$ approaching an end-point of $I_0$, implying by continuity of $h$ that $H = H_0$.
\end{proof}

\subsection{Case: $\mathbb{S}^3$}

Embed $\mathbb{S}^3$ in $\R^4$ by equipping the submanifold $\{x \in \R^4 | x \cdot x = 1\}$ with the induced metric.

\begin{defn}
Let $(\gamma, f) : I \rightarrow \C \times \R \subset \mathbb{S}^3$ be a curve on the interval $I \subset \R$, so that $|\gamma|^2 + f^2 = 1$.  Pick an $m \in \R_+$.  Define the twizzler $T : I \times \R \rightarrow \C \times \C \cong \R^4$ in $\mathbb{S}^3$ to be the surface parameterized by
\[
T(u, v) = (e^{i v}\gamma(u), e^{i m v} f(u))
\]
\end{defn}

\begin{defn}
For a twizzler $T$ in $\mathbb{S}^3$, define the shaving at $u_0 \in I$ to be the surface parameterized by
\[
\mathcal{T}[u_0](v, t) = (e^{i v}\frac{\gamma(u_0)}{|\gamma(u_0)|}\sin t, e^{i m v}\cos t) \quad (v, t) \in [0, 2\pi] \times [0, \cos^{-1} f(u_0)]
\]
\end{defn}

\begin{thm}
The twizzler $T$ in $\mathbb{S}^3$ has constant mean curvature iff there are constants $H$ and $C$ so that, for all values of $\gamma$
\begin{equation}
C = \frac{2\pi}{\sqrt{g}} m f^2 (\gamma' \cdot i\gamma) - H\pi|\gamma|^2
\end{equation}
where $g = (T_u \cdot T_u)(T_v \cdot T_v) - (T_u \cdot T_v)^2$.

Further, if $T$ is not a torus, $H$ is the mean curvature of $T$.
\end{thm}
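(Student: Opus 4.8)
The plan is to run the $\R^3$ argument essentially verbatim, with the $z$-axis and the translation field $\mathbf e_3$ replaced by the great circle $\mathcal C = \{0\}\times S^1 \subset \C\times\C$ and the Killing field $\mathbf Y(z,w) = (0, iw)$ --- the restriction to $\mathbb S^3$ of the rotation of $\R^4$ fixing the first $\C$-factor, which is tangent to $\mathcal C$ and whose orbits are the circles $\{z = \mathrm{const}\}$. As in Section~\ref{twizzler-section}, one passes to the quotient of $\mathbb S^3$ by the isometry $(z,w)\mapsto (z, e^{2\pi i m}w)$ (when $m\notin\Q$ this orbit is not discrete, and one instead argues locally / current-theoretically as in Remark~\ref{conservation-remark}), so that each helix $v\mapsto T(u_0,v)$, $v\in[0,2\pi]$, becomes a loop $\Gamma_{u_0}$ homologous to $\mathcal C$, and each shaving $\mathcal T[u_0]$ becomes a $2$-chain capping $\Gamma_{u_0}$ modulo $\mathcal C$ (i.e.\ $\partial\mathcal T[u_0] = \pm(\Gamma_{u_0} - \mathcal C)$, the radial edges at $v = 0, 2\pi$ cancelling in the quotient). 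In a difference $\mathcal T[u_1] - \mathcal T[u_2]$ the reference circle drops out, so for $J = [u_1,u_2]\subset I$ small enough that $S = T(J\times[0,2\pi])$ is embedded, $S + \mathcal T[u_1] - \mathcal T[u_2]$ is a $2$-cycle bounding a compact region of the quotient.

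Assuming $T$ has constant mean curvature $H$, the first variation formula~\eqref{first-variation-formula} then yields, exactly as in the proofs of Theorem~\ref{conservation-theorem} and the preceding ($\R^3$) twizzler theorem, a constant $C$ with
\[
C = \oint_{\partial\mathcal T[u]}\mathbf Y\cdot\boldsymbol\eta - H\iint_{\mathcal T[u]}\mathbf Y\cdot\boldsymbol\nu \qquad \text{for all } u\in J,
\]
the $\mathcal C$-contribution being absorbed into $C$; the covering-and-compactness argument of the $\R^3$ case then promotes this to a single constant valid on all of $I$. (When $T$ is a flat torus $|\gamma|\equiv\mathrm{const}$, $\iint\boldsymbol\nu$ over helically-bounded pieces vanishes, so any $H$ admits a matching $C$: the analogue of the cylinder case.) It remains to evaluate the two integrals. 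Since $T_u, T_v$ are tangent to $\mathbb S^3$, Gram--Schmidt applies unchanged, and using $\mathbf Y\cdot T_u = 0$ (as $f$ is real), $\mathbf Y\cdot T_v = mf^2$, $T_u\cdot T_v = \gamma'\cdot i\gamma$, $|T_v|^2 = |\gamma|^2 + m^2 f^2$, one gets $\boldsymbol\eta\cdot\mathbf Y = -mf^2(\gamma'\cdot i\gamma)/(\sqrt{g}\,\sqrt{|\gamma|^2+m^2f^2})$ and hence $\oint_{\partial\mathcal T[u]}\mathbf Y\cdot\boldsymbol\eta = -\tfrac{2\pi}{\sqrt{g}}mf^2(\gamma'\cdot i\gamma)$. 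For the shaving term one expands the $4\times4$ determinant $\det(\mathcal T, \mathcal T_v, \mathcal T_t, \mathbf Y)$, which collapses to $\pm\sin t\cos t$, so that $\iint_{\mathcal T[u]}\mathbf Y\cdot\boldsymbol\nu = -\int_0^{2\pi}\!\!\int_0^{\cos^{-1}f}\sin t\cos t\,dt\,dv = -\pi(1-f^2) = -\pi|\gamma|^2$. Substituting and replacing $C$ by $-C$ gives the stated first integral $C = \tfrac{2\pi}{\sqrt{g}}mf^2(\gamma'\cdot i\gamma) - H\pi|\gamma|^2$.

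For the converse, given that this relation --- equivalently~\eqref{conservation-formula} for every shaving --- holds, the first variation formula gives $0 = \iint_{T(J\times[0,2\pi])}(h-H)\mathbf Y\cdot\boldsymbol\nu$ for all small $J$; as $\mathbf Y\cdot\boldsymbol\nu$ and the area density of $T$ are independent of $v$, this reduces to $0 = 2\pi\int_{\gamma(J)}(h-H)(\mathbf Y\cdot\boldsymbol\nu)\,\sqrt{g}\,du$ (now $\sqrt{g}$ the $u$-density), just as in~\eqref{non-zero-shaving-integral-r3}. On the open set $I^* = \{u : \mathbf Y\cdot\boldsymbol\nu\neq 0\}$ the sign argument of the $\R^3$ proof forces $h\equiv H$. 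On a maximal complementary interval, $\mathbf Y\cdot\boldsymbol\nu\equiv 0$ --- and one computes $\mathbf Y\cdot\boldsymbol\nu = \pm ff'/\sqrt{f'^2 + m^2|\gamma'|^2}$ --- forces $f'\equiv 0$ (away from the non-immersed case $f\equiv0$), hence $|\gamma|$ constant, so $T$ restricts there to a piece of a flat torus $\{|z| = \mathrm{const}\}$ of some constant mean curvature $H_0$; continuity of $h$ across the endpoints then gives $H_0 = H$ unless $T$ is entirely such a torus.

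The calculations above are routine once the stage is set, so I expect the genuine work to be the homological bookkeeping in $\mathbb S^3$: making precise that, with consistent orientations, $S + \mathcal T[u_1] - \mathcal T[u_2]$ is a boundary --- in particular that the $v = 0$ and $v = 2\pi$ edges of the shavings and of $S$, together with the reference circle $\mathcal C$, all cancel --- and handling irrational pitch $m$, where the naive quotient is not a manifold, via the immersed/current formulation advertised in Remark~\ref{conservation-remark}. The only other point needing a little care is identifying the degenerate locus $\mathbf Y\cdot\boldsymbol\nu\equiv0$ with the flat tori and recalling that these indeed have constant mean curvature.
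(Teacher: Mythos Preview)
Your proof is correct and follows exactly the paper's approach: the paper's own proof is a single sentence --- take $\mathbf{Y}(z,w) = (0,iw)$ and repeat the $\R^3$ argument with ``cylinder'' replaced by ``torus'' --- and you have simply carried this out in full, including the explicit integral computations the paper omits. Your flagged concern about irrational pitch (where the naive quotient of $\mathbb{S}^3$ is not a manifold) is a genuine subtlety that the paper itself does not address.
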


\begin{proof}
Pick $\mathbf{Y}(z, w) = (0, iw)$.  Then the proof is precisely the same as in $\R^3$, only considering the $\mathbb{S}^3$ equivalent of the cylinder to be the torus.  Torii in $\mathbb{S}^3$ are defined by the parameterization $T(u, v) = (\cos  e^{i u}, \sin t e^{i v})$, for some constant $\xi \in \R$.  All $\mathbb{S}^3$ torii have constant mean curvature, as every point $T(u, v)$ can be mapped to $T(0, 0)$ by ambient isometries.
\end{proof}

\subsection{Case: $\mathbb{H}^3$}

We work with the Lorentz model of $\mathbb{H}^3$, as follows.  Define the quadratic form $Q = diag(-1, -1, -1, 1)$.  Then $\mathbb{H}^3 \cong \{x \in \R^4 | <x, x> = 1\}$ with inner product $<x, y> = x^\top y$.  Further, for some non-zero real $m$, define the mapping $B_m : \R \rightarrow SO(1, 1)$ by
\[
B_m(v) = \begin{pmatrix}
\cosh mv & \sinh mv \\
\sinh mv & \cosh mv
\end{pmatrix}
\]

\begin{defn}
Let $(\gamma, f) : I \rightarrow \C \times \R$ be a curve on the interval $I \subset \R$, so that $f^2 - |\gamma|^2 = 1$.  Pick $m \in \R_+$ .  Define the twizzler $T : I \times [-\infty, \infty] \rightarrow \C \times \C$ in $\mathbb{H}^3$ by the parameterization
\[
T(u, v) = (e^{i v} \gamma(u), B_m(v) i f(u) )
\]
\end{defn}

Define a cylinder in hyperbolic space to be the twizzler of $(\gamma(u), f(u)) = (a e^{i u}, b)$, for $a, b \in \R$ satisfying $b^2 - a^2 = 1$.

\begin{defn}
For a twizzler $T$ in $\mathbb{H}^3$, define the shaving at $u_0 \in I$ to be the surface $S$ parameterized by
\[
\mathcal{T}[u_0](v, t) = (e^{i v} \frac{\gamma(u_0)}{|\gamma(u_0)|}\sinh t, B_m(v) i\cosh t) \quad (v, t) \in [0, 2\pi] \times [0, \cosh^{-1} f(u_0)]
\]
\end{defn}

\begin{thm}
The twizzler $T$ in $\mathbb{H}^3$ has constant mean curvature iff there are constants $H$ and $C$ so that, for all values of $\gamma$
\begin{equation}
C = \frac{2\pi}{\sqrt{g}}m f^2 <\gamma', i \gamma> + H\pi|\gamma|^2
\end{equation}
where $g = <T_u , T_u><T_v , T_v> + <T_u , T_v>^2$.

Further, if $T$ is not a cylinder, then $H$ is the mean curvature of $T$.
\end{thm}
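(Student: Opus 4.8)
The plan is to run, with the obvious substitutions, the same argument as in the $\R^3$ and $\mathbb{S}^3$ cases, the hyperbolic boost $B_m$ now playing the role that translation along the $z$-axis played before. The Killing field to use is (essentially) the infinitesimal generator of $v\mapsto B_m(v)$ acting on the second $\C$-factor, i.e. $\mathbf Y(z,w)=(0,i\bar w)$ — the generator of translation along the geodesic axis $\{(0,w):\langle w,w\rangle=1\}$ fixed by the rotations $(z,w)\mapsto(e^{i\theta}z,w)$. Since $B_m(v)\in SO(1,1)$ preserves $Q$, $\mathbf Y$ is indeed Killing; and since it commutes with the screw motion $\phi(z,w)=(z,B_m(2\pi)w)$ generating $G$ (note $e^{2\pi i}=1$, so $\phi$ is a pure boost), $\mathbf Y$ descends to $\mathbb{H}^3/G$. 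One then checks, exactly as in Section \ref{twizzler-section} for $\R^3$: (i) each helix $v\mapsto T(u_0,v)$, $v\in[0,2\pi]$, is a smooth loop in $\mathbb{H}^3/G$; (ii) the shaving $\mathcal T[u_0]$ really lies on $\mathbb{H}^3$ (because $B_m$ preserves $Q$ and $\cosh^2 t-\sinh^2 t=1$) and has $\partial\mathcal T[u_0]=-\Gamma_{u_0}+(\text{arc of the axis})$, the axis being the hyperbolic stand-in for the $z$-axis and descending to a closed reference $1$-cycle in $\mathbb{H}^3/G$ homologous to the helices; and (iii) for $J\subset I$ small enough that $S=T(J\times[0,2\pi])$ is embedded, $S+\mathcal T[u_1]-\mathcal T[u_2]$ bounds a compact $3$-chain in $\mathbb{H}^3/G$.

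Granting this, the forward direction is immediate: if $h\equiv H$ the first variation formula \eqref{first-variation-formula} collapses to the assertion that $\oint_{\partial\mathcal T[u]}\mathbf Y\cdot\boldsymbol\eta-H\iint_{\mathcal T[u]}\mathbf Y\cdot\boldsymbol\nu$ is independent of $u\in J$ (the same deduction as in Theorem \ref{conservation-theorem}), and the covering/overlap argument on $I$ promotes this to a single constant $C$ valid on all of $I$. It remains to evaluate the two terms: Gram--Schmidt on $(T_u,T_v)$ gives $\boldsymbol\eta\cdot\mathbf Y$ along the helix, and integrating over $v\in[0,2\pi]$ produces the $\tfrac{2\pi}{\sqrt g}mf^2\langle\gamma',i\gamma\rangle$ term; writing $\boldsymbol\nu\,\mathrm dS=\mathcal T[u]_v\wedge\mathcal T[u]_t$ with the orientation forced by $\partial K=-\Gamma$ and integrating over $[0,2\pi]\times[0,\cosh^{-1}f]$ produces the $\pi|\gamma|^2$ term. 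For the converse one copies the $\R^3$ argument verbatim: on the open set where $\mathbf Y\cdot\boldsymbol\nu\neq0$, the hyperbolic analogue of \eqref{non-zero-shaving-integral-r3} together with a fixed-sign-of-$h-H$ argument forces $h=H$; on the complement, $\mathbf Y\cdot\boldsymbol\nu=0$ forces $\gamma'\perp\gamma$ (hyperbolically), so that piece is a segment of a hyperbolic cylinder, which is homogeneous under the rotation-and-boost subgroup and hence has some constant mean curvature $H_0$; connectedness of $I$ then yields $H_0=H$ unless $T$ is a cylinder everywhere — in which case $\iint\boldsymbol\nu$ vanishes over every region bounded by helices and the relation holds for all $H$, which is why only the non-cylinder case pins $H$ down.

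The one genuinely new point — and the step to watch — is the indefinite signature of the ambient $\R^4$. The induced metric on $\mathbb{H}^3$ is $-Q$ restricted to tangent spaces (on which $Q$ is negative definite), so every unit normal, every unit conormal, every normalization constant appearing in the Gram--Schmidt step, the ambient volume element entering $\mathrm{DIV}(\mathbf Y)$, and the area density itself (whose square here reads $g=\langle T_u,T_u\rangle\langle T_v,T_v\rangle+\langle T_u,T_v\rangle^2$, with a $+$ rather than the usual $-$) all carry signs that must be tracked consistently through the computation; this is precisely what flips the $-H\pi|\gamma|^2$ of the $\R^3$/$\mathbb{S}^3$ relations into $+H\pi|\gamma|^2$ here. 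Beyond this bookkeeping, nothing in the argument is new.
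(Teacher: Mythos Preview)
Your proposal is correct and matches the paper's approach exactly: the paper's entire proof is the sentence ``Take $\mathbf{Y}(z, w) = \bigl(0, \bigl(\begin{smallmatrix} 0 & 1 \\ 1 & 0 \end{smallmatrix}\bigr) w\bigr)$; the proof now follows as in the $\R^3$ case,'' and your Killing field $(0, i\bar w)$ is this same map written in complex notation. You have in fact spelled out considerably more of the signature bookkeeping than the paper itself does.
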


\begin{proof}
Take $\mathbf{Y}(z, w) = (0, \begin{pmatrix} 0 & 1 \\ 1 & 0 \end{pmatrix} w)$.  The proof now follows as in the $\R^3$ case.
\end{proof}

\section{Perdomo's Characterization}

An alternate characterization of helicoidal, constant mean curvature surfaces in $\R^3$ is given by Perdomo \cite{perdomo}.  A first integral for the second-order constant mean curvature ODE is interpreted by a kinetic condition called the \emph{treadmillsled}.  We shall use a special parameterization (by angle of normal) to better relate his condition with our conservation law.

\subsection{The treadmillsled}

Intuitively, the treadmillsled is a variation of a roulette -- imagine rolling a curve along a line, while simultaneously moving the line in the opposite direction, so that the curve's point of contact stays in one place.  By tracing out the path of a point fixed relative to the curve, one obtains the treadmillsled.  We shall consider a slight generalization, allowing for any proportion $\ell$ of movement of the line: when $\ell = 0$, the line doesn't move, and we have a roulette; when $\ell = 1$, the line matches the curve's speed, yielding the treadmillsled.

\begin{defn}
The \emph{$\ell$-treadmill} of a $C^2$ curve $\gamma: I \rightarrow \C$ is defined by $\sigma_\ell[\gamma] = (1 - \ell)s - \frac{1}{v}\gamma' \bar{\gamma}$, where $s$ is the arc-length of $\gamma$, and $v$ the speed.
\end{defn}

By construction, $\sigma_\ell$ is independent of parameterization of $\gamma$.  We write $\tau$ for Perdomo's treadmillsled, which is the same as our $\sigma_1$.

\begin{prop}\label{treadmillsled_proposition}
A curve $(x, y): I \rightarrow \C$ is the $\ell$-treadmill of a curve $\gamma$ iff it satisfies the differential equations
\begin{equation}\label{sigma_diffeqs}
\begin{alignedat}{2}
\frac{1}{s'}x' & = -\ell + k y \\
\frac{1}{s'}y' & = (1 - \ell)ks - k x
\end{alignedat}
\end{equation}
with $s: I \rightarrow \R_+$ is strictly increasing, and $k: I \rightarrow \R$.

Further, up to rotations, $\sigma_\ell$ maps $C^2$ curves injectively to $C^1$ curves.
\end{prop}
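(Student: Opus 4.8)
\emph{Plan.} The proposition bundles three assertions: the two implications of the displayed equivalence, and the injectivity claim. The forward implication is a direct computation. Suppose $(x,y)=\sigma_\ell[\gamma]$, and let $s$ be the arc length of $\gamma$, $v=s'$ its speed, $T=\gamma'/v$ its unit tangent and $\kappa$ its signed curvature, so that $T'=i\kappa v\,T$ and $T\bar\gamma'=\gamma'\bar\gamma'/v=v$. Differentiating $\sigma_\ell=(1-\ell)s-T\bar\gamma$ gives $\sigma_\ell'=(1-\ell)v-T'\bar\gamma-T\bar\gamma'=-\ell v-i\kappa v\,T\bar\gamma$, and since $T\bar\gamma=(1-\ell)s-\sigma_\ell$ this rearranges to $\tfrac1v\sigma_\ell'=-\ell+i\kappa\bigl(\sigma_\ell-(1-\ell)s\bigr)$. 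Splitting into real and imaginary parts yields \eqref{sigma_diffeqs} with $k=-\kappa$, and $s$ is strictly increasing since $\gamma$ is immersed.

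For the converse I would reconstruct $\gamma$ from the Frenet data. Given $(x,y)$ together with $s$ (strictly increasing, with the regularity implicit in the statement, $s\in C^2$) and $k$ satisfying \eqref{sigma_diffeqs}, solve the linear ODE $T'=-ik s'\,T$ with $|T(u_0)|=1$ (the modulus is conserved), and put $\gamma(u)=\gamma(u_0)+\int_{u_0}^u s'(\tau)T(\tau)\,d\tau$, choosing $\gamma(u_0)$ so that $T(u_0)\overline{\gamma(u_0)}=(1-\ell)s(u_0)-x(u_0)-iy(u_0)$. Then $\gamma$ is $C^2$, immersed with speed $s'$, unit tangent $T$, arc length $s$, and signed curvature $-k$. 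Finally, both $(1-\ell)s-\sigma_\ell[\gamma]=T\bar\gamma$ and $(1-\ell)s-(x+iy)$ satisfy the linear ODE $\zeta'=s'(1-ik\zeta)$ — the first because $(T\bar\gamma)'=T'\bar\gamma+v=s'(1-ik\,T\bar\gamma)$, the second by rearranging \eqref{sigma_diffeqs} — and they agree at $u_0$, so they coincide; hence $\sigma_\ell[\gamma]=(x,y)$.

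For injectivity up to rotations, first note that $\sigma_\ell$ is unchanged under $\gamma\mapsto e^{i\theta}\gamma$ (both $s$ and $v$ are unchanged, and $T\bar\gamma\mapsto(e^{i\theta}T)(e^{-i\theta}\bar\gamma)=T\bar\gamma$), whereas $\gamma\mapsto\gamma+c$ sends $\sigma_\ell[\gamma]$ to $\sigma_\ell[\gamma]-T\bar c$, so nonzero translations are detected. Now suppose $\sigma_\ell[\gamma_1]=\sigma_\ell[\gamma_2]=:(x,y)$, with arc lengths $s_1,s_2$ (from the common, canonical basepoint) and curvatures $\kappa_1,\kappa_2$. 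Writing $\zeta_j=(1-\ell)s_j-(x+iy)=T_j\bar\gamma_j$, we have $\bigl((1-\ell)s_j-x\bigr)^2+y^2=|\zeta_j|^2=|\gamma_j|^2$; differentiating this and using $\tfrac12(|\gamma_j|^2)'=\mathrm{Re}(\gamma_j'\bar\gamma_j)=s_j'\,\mathrm{Re}(\zeta_j)=s_j'\bigl((1-\ell)s_j-x\bigr)$ shows, after simplification, that $s_j$ solves the first-order ODE $yy'=\bigl((1-\ell)s-x\bigr)(\ell s'+x')$. Since $s_1$ and $s_2$ solve the same ODE with the same initial value, uniqueness forces $s_1=s_2$, hence $\kappa_1=\kappa_2$ by \eqref{sigma_diffeqs}; by the fundamental theorem of plane curves $\gamma_2=e^{i\theta}\gamma_1+c$, and evaluating $\sigma_\ell$ then forces $c=0$. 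Thus $\gamma$ is determined by $\sigma_\ell[\gamma]$ up to a rotation about the origin (and, for $\ell\neq1$, the arc-length origin is pinned down too, since shifting it translates $\sigma_\ell$, so ``up to rotations'' is sharp).

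The step I expect to be the main obstacle is the uniqueness for $s$: the ODE $yy'=\bigl((1-\ell)s-x\bigr)(\ell s'+x')$ is non-degenerate only where $(1-\ell)s-x=\mathrm{Re}(\zeta)=\tfrac{1}{2s'}(|\gamma|^2)'$ is nonzero, i.e.\ away from critical points of $|\gamma|$, so one must propagate $s_1=s_2$ across that locus — by continuity at isolated critical points, and, on an interval where $|\gamma|$ is locally constant, by recognizing $\gamma$ there as a circular arc centered at the origin and handling it directly. This difficulty evaporates when $\ell=1$ (Perdomo's treadmillsled): then $\zeta=-\sigma_1$ is known outright, and $k=\mathrm{Im}(\sigma_1')/\mathrm{Re}(\sigma_1'\,\overline{\sigma_1})$ and then $s'=-\sigma_1'/(1+ik\sigma_1)$ are explicit, so $\gamma=-\overline{\sigma_1}\,T$ with $T'=-iks'T$ is visibly determined up to the choice of the unit vector $T(u_0)$, that is, up to a rotation.
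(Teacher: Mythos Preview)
Your treatment of the equivalence matches the paper's: the forward direction is the direct differentiation (the paper leaves it as ``a simple calculation''; you carry it out and correctly land on $k=-\kappa$), and the converse invokes the fundamental theorem of plane curves to manufacture $\gamma$ from the data $(s,k)$. You are more explicit than the paper in then verifying that the reconstructed $\gamma$ really has $\sigma_\ell[\gamma]=(x,y)$, via the uniqueness argument for the linear ODE in $\zeta=(1-\ell)s-(x+iy)$; the paper just asserts that matching the initial value $\sigma_\ell[\gamma](0)=(x,y)(0)$ does the job.

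Where you genuinely go beyond the paper is on injectivity. The paper's argument is very terse: it reconstructs $\gamma$ from $(s,k)$ via the fundamental theorem, notes that the initial condition pins $\gamma$ down up to rotation, and stops. In effect it treats $(s,k)$ as coming packaged with $(x,y)$, never asking whether two \emph{different} pairs $(s_1,k_1)$, $(s_2,k_2)$ could yield the same treadmill curve. You take the injectivity claim at face value and try to recover $s$ intrinsically from $(x,y)$ alone, via the scalar relation $yy'=\bigl((1-\ell)s-x\bigr)(\ell s'+x')$. That is a strictly more ambitious argument than the paper's, and the obstacle you flag --- degeneracy where $\mathrm{Re}\,\zeta=0$, i.e.\ at critical points of $|\gamma|$ --- is real; the paper simply does not engage with it. Your remark that the case $\ell=1$ collapses cleanly (with $k$ and $s'$ readable directly off $\sigma_1$, so that $\gamma$ is visibly determined up to the phase of $T(u_0)$) is also absent from the paper but is exactly why Perdomo's treadmillsled is the distinguished member of the family.
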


\begin{proof}
Given $\gamma$, verifying \eqref{sigma_diffeqs} if $(x, y) = \sigma_\ell[\gamma]$ is a simple calculation.  $k$ is then the curvature of $\gamma$, and $s$ the arc-length.

Conversely, given equation \eqref{sigma_diffeqs}, then by the fundamental theorem of curves, there is a planar curve $\gamma(t)$ having curvature $k$ and arc-length $s$.  The further condition that $\sigma_\ell[\gamma](0) = (x, y)(0)$ fixes the orientation of $\gamma$ with respect to the origin, i.e. up to rotation.  The curve $\gamma$ is well-defined in the sense that, if $(x_1, y_1) = (x, y) \circ f$ is a reparameterization of $(x, y)$, then $\sigma_\ell[\gamma \circ f] = (x_1, y_1)$.
\end{proof}

Observe that shifting the `starting position' of $\gamma$ will effectively translate $\sigma_\ell[\gamma]$.  Formally, if $I = (a, b] \cup (b, c)$, then $\sigma_\ell[\gamma]|_{(b, c)} = (1 - \ell)s(b) + \sigma_\ell[\gamma|_{(b, c)}]$.  This is true except for the special case $\tau$ (i.e. $\ell = 1$); since no $s$ term is present, $\tau$ is determined without the ambiguity of `starting position'.  Further, $\tau[\gamma](p)$ is determined by only $\gamma(p)$ and $\gamma'(p)$.  This fact, and that $\tau$ is continuous, immediately gives the following lemma.

\begin{lem}\label{piecewise_tau}
Given a smooth curve $\gamma : I \rightarrow \C$, knowing the values of $\gamma$ and $\gamma'$ on some dense subset of $I$ is sufficient to completely determine $\tau[\gamma]$.
\end{lem}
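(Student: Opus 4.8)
The plan is to exploit the two features of $\tau = \sigma_1$ singled out in the paragraph immediately preceding the lemma: first, that $\tau[\gamma](p)$ depends only on the pointwise data $\gamma(p)$ and $\gamma'(p)$ (because the defining formula $\sigma_\ell[\gamma] = (1-\ell)s - \tfrac1v\gamma'\bar\gamma$ loses its only nonlocal term, the arc-length $s$, precisely when $\ell = 1$); and second, that $\tau[\gamma]$ is a continuous curve. From these two facts the lemma is essentially a density argument.

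Concretely, let $D \subset I$ be a dense subset on which the values of $\gamma$ and $\gamma'$ are known. For each $p \in D$ we can compute $\tau[\gamma](p) = -\tfrac{1}{|\gamma'(p)|}\gamma'(p)\overline{\gamma(p)}$ directly from the given data, so $\tau[\gamma]$ is determined on $D$. Now fix any $q \in I$; since $D$ is dense, pick a sequence $p_n \in D$ with $p_n \to q$. Because $\gamma$ is $C^2$ (in particular $C^1$) and is an immersion, the map $p \mapsto \tau[\gamma](p) = -\gamma'(p)\overline{\gamma(p)}/|\gamma'(p)|$ is continuous on $I$ (the denominator never vanishes). Hence $\tau[\gamma](q) = \lim_{n\to\infty}\tau[\gamma](p_n)$, and the right-hand side is determined entirely by the known values $\{\tau[\gamma](p_n)\}$. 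Therefore $\tau[\gamma]$ is determined at every point of $I$, which is the claim.

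The only point requiring a modicum of care — and the closest thing to an obstacle — is making sure the pointwise formula for $\tau$ is legitimate, i.e. that the claim "$\tau[\gamma](p)$ is determined by $\gamma(p)$ and $\gamma'(p)$ alone" is used correctly: one must note that in $\sigma_1[\gamma] = 0\cdot s - \tfrac1v\gamma'\bar\gamma$ the speed $v = |\gamma'|$ and the factor $\gamma'\bar\gamma$ are all pointwise quantities, so no integration of $k$ or $s$ over an interval is involved, and in particular no knowledge of $\gamma$ on a neighborhood is needed. This is exactly the remark already recorded in the text (that $\tau$ carries no "starting position" ambiguity and that $\tau[\gamma](p)$ depends only on $\gamma(p),\gamma'(p)$), so it may simply be invoked. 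Everything else is the standard "a continuous function on $I$ is determined by its restriction to a dense set" observation.
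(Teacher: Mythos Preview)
Your argument is correct and is exactly the one the paper has in mind: the text immediately preceding the lemma records that $\tau[\gamma](p)$ depends only on $\gamma(p)$ and $\gamma'(p)$ and that $\tau$ is continuous, and then says these two facts ``immediately give'' the lemma, which is precisely your density argument spelled out.
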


\subsection{Support parameterization}
Our key angle of attack in relating the ODE's of \cite{perdomo} and the conservation law lies in our choice of parameterization.  We will parameterize in the (cumulative) angle of normal, called the \emph{support parameterization}.  We shall lay out the relavant machinery below.

\begin{defn}
A curve is \emph{strictly convex} if the curvature never vanishes.
\end{defn}

\begin{lem}
Every strictly convex curve can be support-parameterized.
\end{lem}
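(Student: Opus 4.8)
The plan is to show that for a strictly convex $C^2$ curve $\gamma : I \to \C$, the (cumulative) angle of the inward normal --- equivalently, the angle of the unit tangent $\gamma'/v$ --- is a legitimate parameter, i.e. it is a $C^1$ diffeomorphism onto its image, after which we simply reparameterize by it. First I would make precise what ``angle of normal'' means: since $\gamma$ is $C^2$ and immersed, the unit tangent $T(t) = \gamma'(t)/v(t)$ is a $C^1$ map $I \to S^1$, and because $I$ is an interval (simply connected) this lift exists and is $C^1$: there is a $C^1$ function $\theta : I \to \R$, unique up to an additive constant in $2\pi\Z$, with $\gamma'(t)/v(t) = e^{i(\theta(t) + \text{const})}$ (or $e^{i\theta(t)}$ for the normal, differing by $\pi/2$). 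The key computation is $\theta'(t) = k(t)\, v(t)$, where $k$ is the signed curvature and $v = |\gamma'|$ the speed; this is just the standard Frenet relation $T' = k v \, N$ read off in the angle coordinate.

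Next I would invoke strict convexity: $k$ never vanishes, and $v > 0$ since $\gamma$ is immersed, so $\theta' = kv$ is continuous and nowhere zero on the interval $I$, hence $\theta$ is strictly monotone (increasing if $k > 0$, decreasing if $k < 0$) and therefore a homeomorphism from $I$ onto an interval $\tilde I = \theta(I) \subset \R$. Because $\theta$ is $C^1$ with nonvanishing derivative, the inverse function theorem gives that $\theta^{-1} : \tilde I \to I$ is $C^1$. Composing, $\tilde\gamma := \gamma \circ \theta^{-1} : \tilde I \to \C$ is a $C^1$ (in fact one checks $C^2$, since $\gamma$ is $C^2$ and $\theta$ inherits $C^1$ regularity from $\gamma'$ being $C^1$; if one only wants $C^1$ this is automatic) immersed curve with the property that at the parameter value $\phi \in \tilde I$, its unit normal makes angle $\phi$ (up to the fixed global constant) with a reference direction. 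This $\tilde\gamma$ is by definition a support parameterization of $\gamma$, proving the lemma.

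The only mildly delicate point --- and the one I would flag as the main obstacle, though it is a soft one --- is making sure the notion is well-defined globally rather than just locally: one must note that the angle function $\theta$ exists as a genuine (single-valued) $C^1$ function on all of $I$, which relies on $I$ being an interval so that the $S^1$-valued tangent map admits a global continuous (hence $C^1$) lift; on a non-simply-connected domain this would fail. A secondary bookkeeping point is fixing the additive $2\pi\Z$ ambiguity and the $\pi/2$ shift between ``angle of tangent'' and ``angle of normal,'' but these are conventions, not obstructions. Everything else is the inverse function theorem applied to a $C^1$ function with nowhere-zero derivative on an interval, so I expect the write-up to be short.
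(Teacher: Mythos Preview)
Your proposal is correct and follows essentially the same route as the paper: define the cumulative normal angle, compute $\theta' = kv$, use strict convexity to get $\theta'$ nowhere zero hence $\theta$ strictly monotone, and invert. Your write-up is somewhat more careful than the paper's --- you make explicit the global lift of the $S^1$-valued tangent using simple connectedness of $I$, and you invoke the inverse function theorem for the regularity of $\theta^{-1}$, whereas the paper simply asserts the existence of the inverse --- but the argument is the same.
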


\begin{proof}
Consider a curve $\gamma : I = (a, b) \rightarrow \C$ with curvature $k$ and speed $v$.  Without loss of generality suppose $k < 0 \,\, \forall t \in I$.  Let $n$ be the normal of the curve $\gamma$, and $\theta = \arg(n)$ be the normal angle.  Then $n := \frac{i}{v}\gamma'$.

Define the cumulative normal angle by $\Theta : I \rightarrow \R$ by $\Theta(t) = \int_a^t \mathrm{d}\theta$.  Since $\theta' = -vk > 0$, $\Theta$ is strictly increasing.  Therefore there is a an inverse $\Theta^{-1} : \Theta(I) \rightarrow I$.
\end{proof}

There is a nice form for the support parameterization.  Let $\theta$ be the normal angle of $\gamma$.  We can then write $\gamma(\theta) = (q + i r)e^{\imath \theta}$, for some functions $q, r : \Theta(I) \rightarrow \R$.

So that $\theta$ is indeed the angle of the normal, we require that $\theta = \arg(n) = \arg(i\gamma')$, imposing the condition $q' = r$.  We have then $\gamma = (q + iq')e^{\imath \theta}$, and $\gamma'' = (q + q'')ie^{i\theta}$.  As long as $q + q'' > 0$, this is a valid parameterization of $\gamma$, by the above lemma.  (Conversely, every $q$ satisfying $q + q'' > 0$ is the support function for some convex curve.)

The function $q : \Theta(I) \rightarrow \R$ is called the \emph{support function of $\gamma$}.  The advantage in this choice of parameterization becomes clear from the following.

\begin{prop}\label{support_tau}
If strictly convex curve $\gamma$ has support function $q$, then
\[ \tau[\gamma] = -q' - i q \]
\end{prop}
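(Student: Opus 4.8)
The plan is to compute $\tau[\gamma]$ directly from the definition $\tau[\gamma] = \sigma_1[\gamma] = -\frac{1}{v}\gamma'\bar\gamma$ (the $(1-\ell)s$ term vanishes since $\ell = 1$), using the support parameterization $\gamma = (q + iq')e^{i\theta}$ established just above. The key ingredients are the explicit formula for $\gamma$ in terms of $q$, the induced formula for $\gamma'$, and the value of the speed $v$ in this parameterization.

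First I would record the support-parameterization data: $\gamma = (q + iq')e^{i\theta}$ gives, upon differentiating in $\theta$ and using that $\frac{d}{d\theta}(q + iq') = q' + iq'' = q' + i(q'' + q) - iq = i(q + q'') + (q' - iq) = i(q+q'') - i(q + iq')$, the clean expression $\gamma' = \frac{d\gamma}{d\theta} = i(q + q'')e^{i\theta}$ (this is essentially the $\gamma'' = (q+q'')ie^{i\theta}$ computation already in the text, reindexed — one differentiation of $\gamma$ rather than two, since here the independent variable is $\theta$ itself). Consequently the speed with respect to $\theta$ is $v = |\gamma'| = q + q'' > 0$, which is exactly the positivity condition imposed for validity of the parameterization, so $\frac{1}{v}\gamma' = ie^{i\theta}$.

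Next I would substitute into the treadmillsled formula: $\tau[\gamma] = -\frac{1}{v}\gamma'\,\bar\gamma = -\big(ie^{i\theta}\big)\overline{(q+iq')e^{i\theta}} = -ie^{i\theta}(q - iq')e^{-i\theta} = -i(q - iq') = -iq - q' = -q' - iq$. This is the claimed identity. I should note, invoking the parameterization-independence of $\sigma_\ell$ stated right after its definition, that although the computation was done in the support parameterization, the resulting equality of curves is intrinsic, so the formula $\tau[\gamma] = -q' - iq$ holds as a statement about the treadmillsled regardless of how one later chooses to parameterize.

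I do not anticipate a genuine obstacle here — the proposition is a one-line computation once the support-parameterization identities $\gamma' = i(q+q'')e^{i\theta}$ and $v = q + q''$ are in hand, and both of those are immediate from material already developed. The only point requiring a little care is bookkeeping the sign and orientation conventions: the text sets $n = \frac{i}{v}\gamma'$ and works under the normalization $k < 0$ (so $\theta' = -vk > 0$), and one must make sure the factor of $i$ in $\gamma' = i(q+q'')e^{i\theta}$ comes out with the correct sign consistent with $\theta$ genuinely being $\arg(i\gamma')$; this is guaranteed by the condition $q' = r$ imposed when writing $\gamma = (q+ir)e^{i\theta}$. Everything else is routine algebra with $e^{i\theta}$.
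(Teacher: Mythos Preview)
Your proposal is correct and follows essentially the same route as the paper: both plug the support-parameterization identities $\gamma'=(q+q'')ie^{i\theta}$, $v=q+q''$, and $\bar\gamma=(q-iq')e^{-i\theta}$ into $\tau[\gamma]=-\frac{1}{v}\gamma'\bar\gamma$ and simplify to $-q'-iq$. Your write-up is more detailed about sign and orientation bookkeeping, but the underlying one-line computation is identical.
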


\begin{proof}
From the definition of $\sigma_1 \equiv \tau$, and the above comments, we have
\begin{align*}
\tau[\gamma](\theta) &= -\frac{1}{q + q''} (q + q'') i e^{i \theta} (q - i q') e^{-i \theta} \\
 &= -q' - iq
\end{align*}
\end{proof}

\subsection{Twizzlers of constant mean curvature}

We will quote Perdomo's theorem without proof, but will provide a sketch of its origin.

\begin{thm}\label{perdomo_cmc_theorem}
(Perdomo) The twizzler $T \equiv \langle \gamma, m \rangle$ has constant mean curvature $H$ iff: $T$ is a cylinder of radius $-\frac{1}{2H}$, or $\tau[\gamma]$ satisfies
\begin{equation}\label{perdomo_cmc_formula}
H(x^2 + y^2) - \frac{2my}{\sqrt{m^2 + x^2}} = M
\end{equation}
for some constant $M \geq -\frac{1}{H}$.
\end{thm}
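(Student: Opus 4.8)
The plan is to derive Perdomo's formula \eqref{perdomo_cmc_formula} directly from the conservation law \eqref{first-integral-r3-twizzler} already established for twizzlers in $\R^3$, using the support parameterization to make the two conditions manifestly identical. First I would dispose of the cylinder case: if $\gamma' \perp \gamma$ everywhere then $T$ is a round cylinder of some radius $\rho$, and a standard computation gives $H = -\tfrac{1}{2\rho}$, matching the stated radius $-\tfrac{1}{2H}$. Having set cylinders aside, the previous theorem tells us $T$ has constant mean curvature $H$ iff \eqref{first-integral-r3-twizzler} holds, i.e. $C = \tfrac{2\pi}{\sqrt{g}}\, m\,\gamma'\cdot i\gamma - H\pi|\gamma|^2$ for a constant $C$.

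The core of the argument is then a change of variables. By Lemma~\ref{piecewise_tau} it suffices to work on the dense open set where $\gamma$ is strictly convex (away from the cylinder pieces), so $\gamma$ may be support-parameterized as $\gamma(\theta) = (q + iq')e^{i\theta}$ with $\gamma'' = (q+q'')ie^{i\theta}$. I would compute each ingredient of \eqref{first-integral-r3-twizzler} in terms of $q$: first $|\gamma|^2 = q^2 + (q')^2$; next the speed $v = |\gamma'|$ — noting $\gamma'$ is, up to the reparameterization factor, proportional to $\gamma$ rotated, since $n = i\gamma'/v$ and $\theta = \arg(i\gamma')$ forces $\gamma' \parallel (q+iq')\cdot$(something) — more carefully, differentiating $\gamma = (q+iq')e^{i\theta}$ gives $\gamma' = i(q+q'')e^{i\theta}\,\tfrac{d\theta}{d(\cdot)}$, so $\gamma'$ points in direction $ie^{i\theta}$ and $\gamma'\cdot i\gamma$ reduces to a clean expression: $i\gamma = i(q+iq')e^{i\theta} = (iq - q')e^{i\theta}$, and $\gamma' \propto ie^{i\theta}$, giving $\gamma'\cdot i\gamma \propto \operatorname{Re}\big(\overline{ie^{i\theta}}(iq-q')e^{i\theta}\big) \propto q$. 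Then $\sqrt{g}$ involves $v$ and $m$ via $\sqrt{g} = v\sqrt{|\gamma|^2 + m^2}$ (from the $\R^3$ computation $T_u\cdot T_u = v^2$, $T_v\cdot T_v = |\gamma|^2+m^2$, $T_u\cdot T_v = mv\,\gamma'\cdot ie_3 / v$ — actually $T_u\cdot T_v$ contributes the cross term that was already handled in the earlier proof where $\oint \mathbf Y\cdot\boldsymbol\eta = -\tfrac{2\pi}{\sqrt g}m\gamma'\cdot i\gamma$). Feeding these in, the conservation relation should collapse to $H(q^2 + (q')^2) - \tfrac{2mq}{\sqrt{m^2 + (\text{something involving }q')}} = \text{const}$, and by Proposition~\ref{support_tau}, $\tau[\gamma] = -q' - iq$, so $x = -q'$, $y = -q$ and $x^2+y^2 = q^2+(q')^2$; with $x^2 = (q')^2$ the denominator $\sqrt{m^2+x^2}$ appears and $-2my/\sqrt{m^2+x^2} = 2mq/\sqrt{m^2+(q')^2}$, yielding exactly \eqref{perdomo_cmc_formula} with $M = -C/\pi$ (up to sign bookkeeping).

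The main obstacle I anticipate is the bookkeeping of signs, orientations, and reparameterization factors: the support parameterization introduces a Jacobian $d\theta/du = \theta' = -vk > 0$ that must be threaded consistently through $\sqrt{g}$, $\gamma'\cdot i\gamma$, and the conormal $\boldsymbol\eta$; the sign conventions on $k$ (the lemma takes $k<0$), on $\boldsymbol\nu$, and on $\tau = \sigma_1$ all interact. I would be careful that $\gamma'\cdot i\gamma$ in \eqref{first-integral-r3-twizzler} is computed with $\gamma$ in \emph{any} parameterization (it is not parameterization-independent on its own — only the combination $\tfrac{1}{\sqrt g}\gamma'\cdot i\gamma$, equivalently $\tfrac{1}{v\sqrt{|\gamma|^2+m^2}}\gamma'\cdot i\gamma$, is), so I must track whether switching to $\theta$ as the variable changes $\gamma'$ by the factor $1/\theta'$ and confirm it cancels against the corresponding factor in $\sqrt g$.

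Finally I would handle the constraint $M \geq -\tfrac{1}{H}$: this should follow from the requirement $q + q'' > 0$ (strict convexity / validity of the support parameterization) together with the formula, or equivalently from the geometric fact that $\tau[\gamma]$ must avoid a certain disk; I would check that the minimum of $H(x^2+y^2) - 2my/\sqrt{m^2+x^2}$ over the admissible curve, or a limiting degenerate (cylinder) configuration, produces exactly the bound $-\tfrac{1}{H}$, tying the inequality back to the cylinder case excluded at the start. A remark noting that Lemma~\ref{piecewise_tau} lets us pass from the dense convex locus to all of $\gamma$ completes the equivalence.
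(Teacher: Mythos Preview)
The paper does not actually prove this theorem: it is quoted from Perdomo, and the accompanying Remark only sketches its origin as a first integral of the second-order CMC ODE (differentiate \eqref{perdomo_cmc_formula} in the support parameterization and recover the standard mean-curvature expression for a twizzler). Your route is different: you derive \eqref{perdomo_cmc_formula} from the already-established conservation law \eqref{first-integral-r3-twizzler} by rewriting everything in the support parameterization and then identifying $(x,y)=\tau[\gamma]=(-q',-q)$. This is correct, and it is precisely the computation the paper performs in the subsequent ``Equivalence'' subsection, where the chain
\[
C=\frac{2\pi m q(q+q'')}{\sqrt{(q+q'')^2(q^2+q'^2+m^2)-(q+q'')^2q^2}}-H\pi(q^2+q'^2)=\frac{2\pi m q}{\sqrt{m^2+q'^2}}-H\pi(q^2+q'^2)=-\pi M
\]
is worked out to establish $C=-\pi M$. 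In effect you are promoting that equivalence calculation to a standalone proof of Perdomo's theorem, which is legitimate once \eqref{first-integral-r3-twizzler} is in hand.

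Two points to tighten. First, you cite Lemma~\ref{piecewise_tau} to justify working on a ``dense open set where $\gamma$ is strictly convex,'' but that lemma only says $\tau$ is determined by $(\gamma,\gamma')$ on a dense set; it does not supply strict convexity. The paper obtains the needed density (in the CMC, non-helicoid case) from Lemma~\ref{line-segment-lemma}, using real-analyticity of CMC surfaces to rule out line segments in $\gamma$. Second, neither your outline nor the paper's equivalence computation addresses the bound $M\ge -1/H$; that inequality belongs to Perdomo's original classification and requires its own argument (your suggestion of analyzing the extremum of the left side of \eqref{perdomo_cmc_formula}, with the cylinder as the degenerate boundary case, is the right direction but is not carried out in the paper).
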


\begin{rem}
The above theorem arises directly as a first-integral of the second-order ODE condition for CMC twizzlers.  Explicitly, suppose $\gamma$ has a support parameterization.  Then express $\tau[\gamma]$ in terms of the support function $q$ of $\gamma$, and differentiate \eqref{perdomo_cmc_formula}.  We obtain
\[
\frac{H}{2m}(2q'q + 2q''q') = \frac{-q'}{(m^2 + q'^2)^{1/2}} + \frac{q''q'q}{(m^2 + q'^2)^{3/2}}
\]
which, if $q' \neq 0$ (i.e. $T$ is not a cylinder), simplifies to the canonical expression for mean curvature $H$ of a twizzler, expressed in terms of a support function $q$:
\[
H = \frac{-m}{v(m^2 + q'^2)^{3/2}}(m^2+q'^2-q q'')
\]
\end{rem}

\subsection{Equivalence}

An equivalance can be directly established between the two treadmillsled and conservation law characterizations.  This equivalence is clear if $T$ is the helicoid, as both relations \eqref{first-integral-r3-twizzler} and \eqref{perdomo_cmc_formula} reduce to $0$.  Otherwise we need the following lemma.

\begin{lem}\label{line-segment-lemma}
An xy-slice of a CMC twizzler contains a line segment iff it is a line.
\end{lem}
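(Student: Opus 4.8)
The plan is to work in the support parameterization and reduce the statement to an elementary ODE observation. An xy-slice of the twizzler $T\equiv\langle\gamma,m\rangle$ is (a rigid motion of) the base curve $\gamma$; asserting that this slice "contains a line segment" means $\gamma$ is affine on some subinterval $I_0\subset I$, i.e. $\gamma''\equiv 0$ there, equivalently the curvature $k$ of $\gamma$ vanishes identically on $I_0$. The claim is that if $T$ is CMC and $k\equiv 0$ on a subinterval, then $k\equiv 0$ on all of $I$. Since a straight line has no support parameterization (curvature vanishes), the right framework is the canonical mean-curvature expression for a twizzler in terms of the base curve, which I would write in arc-length: with $\gamma$ unit-speed, the CMC condition \eqref{first-integral-r3-twizzler} (or equivalently the second-order ODE appearing in the remark following Theorem \ref{perdomo_cmc_theorem}) becomes a second-order ODE in $\gamma$ whose coefficients are real-analytic functions of $\gamma,\gamma'$ wherever $\gamma\neq 0$ — indeed the expression $H=\frac{-m}{v(m^2+q'^2)^{3/2}}(m^2+q'^2-qq'')$ and its arc-length counterpart have no singularities away from $\gamma=0$.

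First I would record that, by the main twizzler theorem, a CMC twizzler satisfies the first integral \eqref{first-integral-r3-twizzler}, and that differentiating it yields a second-order ODE for $\gamma$ of the form $\gamma''=F(\gamma,\gamma')$ with $F$ real-analytic on the region $\{\gamma\neq 0\}$ (one must only check the denominator $\sqrt{g}=\sqrt{|\gamma|^2+m^2}$, with $m>0$, never vanishes, so this region is all of $\mathbb{C}$ minus possibly the origin, and in fact $|\gamma|^2+m^2>0$ always, so $F$ is analytic everywhere $\gamma$ is defined). Next, suppose $\gamma$ restricted to a subinterval $I_0$ is a line segment. Then on $I_0$ we have $\gamma(t)=a+tb$ for constants $a,b\in\mathbb{C}$ with $b\neq 0$; this is a real-analytic solution of the ODE on $I_0$. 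By the uniqueness/real-analyticity theorem for solutions of analytic ODEs (Cauchy–Kovalevskaya, or simply: two solutions agreeing with their first derivatives at a point coincide, and a real-analytic solution is determined by its germ), the solution $\gamma$ on all of $I$ agrees with the unique analytic continuation of the affine function, hence is itself affine, so the whole xy-slice is a line. The converse direction is trivial.

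The main subtlety — and the step I would be most careful about — is making precise that the affine curve genuinely \emph{solves} the CMC twizzler ODE on $I_0$: one must verify that plugging $\gamma''=0$ into the canonical mean-curvature formula gives a well-defined \emph{constant} value of $h$ on $I_0$ (indeed, for $\gamma''=0$ one computes $h=\frac{-m}{v(|\gamma|^2+m^2)^{3/2}}(|\gamma|^2+m^2)=\frac{-m}{v(|\gamma|^2+m^2)^{1/2}}$, which is generally \emph{not} constant along a line segment unless the line passes through the origin). So in fact the hypothesis is stronger than it first appears: for a \emph{CMC} twizzler, a line-segment slice forces $h$ to equal this non-constant expression on $I_0$, which is constant only when the line contains the $z$-axis' foot, i.e. $a\perp b$ in the appropriate sense — and then $T(I_0\times[0,2\pi])$ is a flat cylinder, matching the dichotomy in the earlier twizzler theorem. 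Thus the cleanest route is: on $I_0$ the surface is a piece of a cylinder with mean curvature $H_0$; CMC forces $H_0=H$ and forces the line through the origin; then apply real-analyticity of the CMC ODE away from the degenerate locus to propagate "line through origin" to all of $I$, concluding the slice is a full line. I would present the argument in this order, with the analyticity-of-solutions step flagged as the technical heart.
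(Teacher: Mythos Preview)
Your route differs from the paper's, which dispatches the lemma in one line by invoking two classical facts: CMC surfaces are real-analytic, and the only ruled non-planar CMC surface is the helicoid. A line segment in $\gamma$ makes $T(I_0\times\R)$ a ruled patch of a CMC surface; that patch is therefore a piece of a helicoid, and real-analyticity of $T$ then forces the whole surface to be the helicoid, so $\gamma$ is a line.

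Your ODE-level approach is reasonable in spirit, but the execution contains a concrete error and an unnecessary detour. The formula $h=\frac{-m}{v(|\gamma|^2+m^2)^{1/2}}$ that you extract for the mean curvature along a straight $\gamma$ is obtained by illegitimately substituting into the support-parametrization expression, which is unavailable precisely when the curvature vanishes. Test it against the helicoid: there $\gamma(u)=u$ is a line through the origin and $T$ is minimal, yet your formula gives $h\neq 0$. So the paragraph about ``forcing the line through the origin'' and the piece being ``a flat cylinder'' collapses (a line through the origin yields a helicoid, not a cylinder). The good news is that this whole detour is unnecessary. Once you know $\gamma$ is real-analytic---either because $T$ is and $\gamma(u)=T(u,0)$, or because $\gamma$ solves a second-order ODE with analytic right-hand side---the identity theorem for real-analytic functions of one variable immediately gives that $\gamma$ affine on an open subinterval is affine on all of $I$. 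You never need to verify separately that the extended affine curve solves the CMC ODE; that check is only required for the ``two solutions agreeing at a point'' version of uniqueness, which, as you correctly suspected, does \emph{not} go through here without further work.
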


\begin{proof}
Immediate from the fact the only ruled, non-planar CMC surface is the helicoid, and the well-known property that CMC surfaces are real analytic.
\end{proof}

Then if $T$ is not a helicoid, using lemmas \ref{line-segment-lemma} and \ref{piecewise_tau}, we can (piecewise) support parameterize $\gamma$ of the twizzler $\langle \gamma, m \rangle$, using support function $q$.  The conservation law \eqref{first-integral-r3-twizzler}, written in terms of $q$, is precisely equation \eqref{perdomo_cmc_formula}:
\begin{align*}
C
 &= \frac{2\pi}{\sqrt{g}} m\gamma' \cdot i \gamma - H\pi |\gamma|^2 \\
 &= \frac{2\pi m q (q + q'')}{\sqrt{(q + q'')^2(q^2 + q'^2 + m^2) - (q + q'')^2 q^2}} - H \pi(q^2 + q'^2) \\
 &= \frac{2\pi}{\sqrt{m^2 + q'^2}} m q - H\pi(q^2 + q'^2) \\
 &= -\pi M
\end{align*}

Thus, our theorem \ref{conservation-theorem} can be expressed in the language of Perdomo's treadmillsled, and conversely Perdomo's theorem \ref{perdomo_cmc_theorem} can be rewritten as a conservation law.

\begin{thm}
If $C$ is the constant from theorem \ref{conservation-theorem}, and $M$ the constant from theorem \ref{perdomo_cmc_theorem}, then $C = -\pi M$.
\end{thm}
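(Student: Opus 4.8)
The plan is to reduce the claimed identity $C=-\pi M$ to the chain of algebraic manipulations already displayed just before the theorem statement, and to justify carefully the one nontrivial reduction step, namely the passage from the general (arbitrarily parameterized) conservation constant to the support-parameterized expression. First I would dispose of the trivial cases: if $T$ is a cylinder of radius $-\tfrac1{2H}$, then relation \eqref{first-integral-r3-twizzler} forces $\gamma'\cdot i\gamma=0$ and $|\gamma|^2 = \tfrac1{4H^2}$, so $C = -H\pi/(4H^2) = -\pi/(4H)$, while Perdomo's constant is $M = 1/(4H^2)\cdot H + 0 = 1/(4H)$ wait — one must check the cylinder value of \eqref{perdomo_cmc_formula} using $\tau[\gamma]$ for the circle of radius $-\tfrac1{2H}$ and confirm $C=-\pi M$ there directly; similarly the helicoid case gives $C=0=M$. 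These boundary cases are bookkeeping, not the substance.

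The substance is the generic case. Here I would invoke Lemma \ref{line-segment-lemma} together with Lemma \ref{piecewise_tau}: since $T$ is neither a cylinder nor a helicoid, no $xy$-slice of $T$ is a line, so on a dense open subset of $I$ the base curve $\gamma$ is strictly convex and hence admits a support parameterization with support function $q$ satisfying $q+q''>0$. On each such subinterval I would substitute $\gamma = (q+iq')e^{i\theta}$, $\gamma'' = (q+q'')ie^{i\theta}$, and the induced expression $\gamma' = (q+q'')ie^{i\theta}\cdot(\text{speed})^{-1}$… — more cleanly, I would just feed the support-parameterized quantities into the right-hand side of \eqref{first-integral-r3-twizzler}. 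This yields, exactly as in the displayed computation, $|\gamma|^2 = q^2+q'^2$, $m\gamma'\cdot i\gamma$ producing the factor $mq(q+q'')$, and $\sqrt{g}$ simplifying (the $(q+q'')^2 q^2$ terms cancel) to $(q+q'')\sqrt{m^2+q'^2}$, leaving
\[
C = \frac{2\pi m q}{\sqrt{m^2+q'^2}} - H\pi(q^2+q'^2).
\]
By Proposition \ref{support_tau}, $\tau[\gamma] = -q'-iq$, so $x=-q'$, $y=-q$, $x^2+y^2 = q^2+q'^2$, and $m^2+x^2 = m^2+q'^2$; substituting into \eqref{perdomo_cmc_formula} gives $M = H(q^2+q'^2) + \tfrac{2mq}{\sqrt{m^2+q'^2}}$, whence $-\pi M$ equals the right-hand side above, i.e. $C=-\pi M$ on the dense subset, and both sides being constant the identity holds on all of $I$.

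The main obstacle is not any single computation but making the density argument airtight. One must check that the constant $C$ appearing in \eqref{first-integral-r3-twizzler} is genuinely the same constant across the (possibly countably many) maximal convex subintervals — this is handled by continuity of the integrand in \eqref{first-integral-r3-twizzler} and the already-proven fact (in the $\R^3$ twizzler theorem) that $C$ is globally well-defined on $I$ — and similarly that Perdomo's $M$ is a single global constant, which follows because $\tau[\gamma]$ is determined on a dense set by Lemma \ref{piecewise_tau} and \eqref{perdomo_cmc_formula} is then forced to hold with one value of $M$ throughout. A secondary subtlety is the sign/orientation convention: the support parameterization lemma assumes $k<0$, and one must verify the resulting sign of $\sqrt{g}$ and of $\gamma'\cdot i\gamma$ is consistent with the orientation fixed by $\partial K=-\Gamma$ in the derivation of \eqref{first-integral-r3-twizzler}, so that no spurious sign creeps into the final comparison. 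Once these two points are nailed down, the theorem is simply the observation that the two displayed formulas for $C$ and $-\pi M$ coincide term by term.
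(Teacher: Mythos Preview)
Your approach is exactly the paper's: support-parameterize $\gamma$ on a dense open set (via Lemmas \ref{line-segment-lemma} and \ref{piecewise_tau}), plug into \eqref{first-integral-r3-twizzler}, and compare term by term with \eqref{perdomo_cmc_formula} using Proposition \ref{support_tau}. The paper's proof is literally the displayed chain of equalities you reproduce, preceded by the helicoid remark; it does not handle the cylinder case separately or spell out the density/global-constant argument as you do.

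There is, however, a genuine slip in your execution. With $(x,y)=\tau[\gamma]=(-q',-q)$ you correctly obtain
\[
M \;=\; H(q^2+q'^2)\;+\;\frac{2mq}{\sqrt{m^2+q'^2}},
\]
so that $-\pi M = -\pi H(q^2+q'^2) - \dfrac{2\pi mq}{\sqrt{m^2+q'^2}}$. This does \emph{not} agree with your displayed $C = \dfrac{2\pi mq}{\sqrt{m^2+q'^2}} - \pi H(q^2+q'^2)$; the first terms differ in sign. You flag orientation as ``a secondary subtlety'' and then assert the identity anyway, but the sign must actually be resolved --- by tracking whether the support parameterization reverses the orientation of $\gamma$ relative to the one fixed when deriving \eqref{first-integral-r3-twizzler}, or equivalently whether the conormal $\boldsymbol\eta$ points with or against increasing $u$. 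Until that is pinned down, the final equality is not justified. (Your cylinder paragraph is also garbled: for $\gamma=re^{iu}$ one has $\gamma'\cdot i\gamma = r^2\neq 0$, so that case needs redoing; the paper simply omits it.)
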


\bibliography{references}{}
\bibliographystyle{alphanum}

\end{document}